\newcommand{\Aut}{\operatorname{Aut}}
\newcommand{\gr}{\operatorname{gr}}
\begin{document}

\title[On singular fibers of parabolic fibrations]{On singular fibers of parabolic fibrations}

\subjclass[2020]{14E30}

\begin{abstract}
We describe the singular fibers of a parabolic fibration $f:X\to Y$ whose moduli divisor $M_Y$ is numerically trivial and whose discriminant divisor $B_Y$ is zero.
\end{abstract}

\author{Yiming Zhu}
\address{School of Mathematical Science, University of Science and Technology of China, Hefei 230026, P.R.China.} \email{zhuyiming@ustc.edu.cn}

\maketitle

\setcounter{tocdepth}{1}
\tableofcontents

\section{Introduction}
Let $f:S\to C$ be an elliptic fibration over a curve. By successively contracting $(-1)$-curves on fibers, we get a birational morphism $\pi:S\to S'$ to a smooth surface $S'$, so that the resulting elliptic fibration $f':S'\to C$ has no $(-1)$-curves on its fibers. Kodaira's canonical bundle formula of $f'$ writes:
\begin{align*}
K_{S'}=f'^*(K_C+L+\sum_P(1-{1}/{m_P})P),    
\end{align*}
where the sum runs over all points $P$ such that the fiber $f'^*P$ is a multiple fiber of multiplicity $m_P$, and $L$ is a divisor associated to the line bundle $f'_*O_{S'}(K_{S'/C})$. Assume that $\deg L=0$ and $\sum_P(1-{1}/{m_P})P=0$. Then $f'$ is locally trivial by \cite[Chapter 3, Theorem 18.2]{BHPV2004}. In particular, every fiber $f'^*P$ is reduced and irreducible. Now, consider the original fibration $f:S\to C$. Write $ K_S=\pi^*K_{S'}+B$. Then $B$ is an effective divisor that does not dominate $C$. Let $f^*P=\sum_ia_iQ_i$ be a fiber of $f$, where the $Q_i$ are prime divisors, and let $f^*P\setminus B:=\sum_{i: Q_i\not\subset B}a_iQ_i$. Then $f^*P\setminus B$ is reduced and irreducible.

In this note, we consider the higher-dimensional case. Following Ueno, we call a fibration parabolic if its general fiber has Kodaira dimension zero. For parabolic fibrations, we have the following canonical bundle formula of Fujino and Mori.

\begin{thm}[\cite{FujinoMori2000}]\label{thm:fujino-mori}
Let $f:X\to Y$ be a parabolic fibration between smooth projective varieties and let $F$ be its general fiber. Let $b=\min\{k\in\Zz_{>0}\mid \dim H^0(kK_F)>0\}$. Then there are $\Qq$-divisors $B_Y$ (the discriminant divisor), $M_Y$ (the moduli divisor), and $B_X$ such that
\begin{align*}
bK_X=f^*b(K_Y+M_Y+B_Y)+B_X,    
\end{align*}
with the following properties:
\begin{enumerate}
    \item If we write $B_X=B_X^{+}-B_X^{-}$ as a difference of two effective $\Qq$-divisors without common components, then $f_*O_X(\lf mB_X^{+}\rf)=O_Y$ for all positive integers $m$, and $\codim f(B_X^{-})\geq2$.
    \item $B_Y=\sum_P(1-t_P)P$, where $t_P=\lct(X,-\frac{1}{b}B_X,f^*P)$, the log canonical threshold of $f^*P$ with respect to the pair $(X,-\frac{1}{b}B_X)$ near the generic point of $P$.
\end{enumerate}

\end{thm}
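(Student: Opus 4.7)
My plan is to construct $M_Y$, $B_Y$, and $B_X$ in this order: fix $M_Y$ Hodge-theoretically, read off $B_Y$ from log canonical thresholds of fibers, and let $B_X$ be forced by the formula. Property (2) then holds by construction, and the real work lies in building $M_Y$ and verifying property (1).

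For the moduli part I would use that, by minimality of $b$, the general fiber $F$ has $h^0(bK_F)=1$, so $f_*\omega_{X/Y}^{\otimes b}$ is a torsion-free rank-one sheaf. To give it the right transformation properties I would pass to a weakly semistable reduction $\tau:Y'\to Y$ in the sense of Abramovich--Karu, together with a resolution $f':X'\to Y'$ of the pulled-back family. On $Y'$, Hodge theory applied to the variation of Hodge structure $R^{\dim F}f'_*\Qq_{X'}$---specifically Deligne's canonical extension together with the unipotency of monodromy guaranteed by the reduction---shows that $(f')_*\omega_{X'/Y'}^{\otimes b}$ extends to a line bundle whose class, divided by $b$, defines a $\Qq$-Cartier divisor $M_{Y'}$ with predictable behaviour under further base change. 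I would then descend $M_{Y'}$ to a $\Qq$-divisor class $M_Y$ on $Y$ and check independence of the chosen reduction.

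With $M_Y$ fixed, set $B_X:=bK_X-bf^*(K_Y+M_Y)-bf^*B_Y$, where $B_Y=\sum_P(1-t_P)P$ with $t_P$ as in (2). The apparent circularity dissolves because $t_P$ is determined by the components of $B_X$ lying over the generic point of $P$, which in turn depend only on $bK_X$ and $f^*P$ modulo an overall constant along $P$; thus one solves for $B_Y$ prime by prime. Since $t_P=1$ wherever $f^*P$ is snc, $B_Y$ is supported on the discriminant locus and is a finite sum. For property (1), the $(1-t_P)$-subtraction in $B_Y$ is calibrated so that the divisorial dominant part of $B_X$ is absorbed; consequently every prime component of $B_X^-$ is $f$-exceptional with image of codimension $\geq 2$, and $f_*O_X(\lfloor mB_X^+\rfloor)=O_Y$ follows because any section of this sheaf agrees with the constant section in codimension one on $Y$ and so extends by the Hartogs-type property for reflexive sheaves.

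The main obstacle is the construction and well-definedness of $M_Y$: the Hodge-theoretic descent and the compatibility with base change are deep and occupy most of Fujino--Mori's paper. Once $M_Y$ is established with the correct functoriality, the definition of $B_Y$ via log canonical thresholds is formal, and property (1) follows from the codimension analysis sketched above.
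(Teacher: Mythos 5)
The paper does not prove this statement at all---it is quoted from Fujino--Mori, and the relevant construction is their Proposition 2.2---so your proposal has to be measured against that construction, and it diverges from it in a way that creates a genuine gap. The problem is the order in which you build the three divisors. In Fujino--Mori, $B_X$ is pinned down \emph{first}: since $\dim H^0(F,bK_F)=1$, the sheaf $f_*O_X(bK_{X/Y})$ has rank one; one fixes a divisor $D$ with $O_Y(D)\cong (f_*O_X(bK_{X/Y}))^{\wedge}$, sets $bK_{X/Y}=f^*D+B_X$, and property (1) is verified for this $B_X$ directly from the rank-one condition (essentially the argument of Lemma \ref{lem:very exceptional}). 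Only then is $t_P$ well defined, $B_Y:=\sum_P(1-t_P)P$ is a definition, and $M_Y:=\frac{1}{b}D-B_Y$ is the residual term. No Hodge theory enters at this stage; the weak semistable reduction and canonical-extension machinery you invoke is what Fujino--Mori use later, for the semipositivity and base-change properties of the semistable part, none of which is asserted in the statement here. So your hardest step is solving a problem the statement does not pose, while the actual content of (1) is left unproved: your claim that a section of $f_*O_X(\lf mB_X^+\rf)$ ``agrees with the constant section in codimension one'' is exactly what needs an argument, and it comes from the normalization of $B_X$ via the pushforward sheaf, not from the lct calibration of $B_Y$.

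More seriously, your proposed resolution of the circularity does not work. Fix $M_Y$ first, as you do, and let $\beta$ be the unknown coefficient of a prime divisor $P$ in $B_Y$. Near the generic point of $P$ one has $B_X(\beta)=B_X(0)-b\beta f^*P$, hence $-\frac{1}{b}B_X(\beta)=-\frac{1}{b}B_X(0)+\beta f^*P$, and therefore $t_P(\beta)=t_P(0)-\beta$: the log canonical threshold shifts by exactly the constant you shift $B_Y$ by. The self-consistency equation $\beta=1-t_P(\beta)=1-t_P(0)+\beta$ then either has no solution (if $t_P(0)\neq1$) or holds for every $\beta$ (if $t_P(0)=1$); in neither case does ``solving prime by prime'' determine $B_Y$. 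The circle is broken in Fujino--Mori precisely because $t_P$ is computed from a $B_X$ that has already been normalized by property (1), independently of any splitting of $\frac{1}{b}D$ into $M_Y+B_Y$. Relatedly, descending a Hodge-theoretically defined divisor from a weak semistable reduction $Y'$ back to $Y$ is not routine: the pullback relation $q^*M_Y=M_{Y'}$ involves the discriminant correction on $Y$, i.e.\ it presupposes the very $B_Y$ you are trying to define. To repair the proposal, reverse the order: construct $B_X$ from $(f_*O_X(bK_{X/Y}))^{\wedge}$, prove (1) using $\dim H^0(F,bK_F|_F)=1$, and only then define $B_Y$ and $M_Y$, making (2) true by construction.
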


The following is the first main result of this note.

\begin{thm}\label{thm:MMP to isotrivial}
Let $f:X\to Y$ be a parabolic fibration between smooth projective varieties. Assume that $M_Y$ is numerically trivial and $B_Y=0$. If $X$ has a good minimal model $X'$ over $Y$, and $f':X'\to Y$ is the induced fibration with general fiber $F'$, then there is a finite \'etale cover $Y'\to Y$ such that $X'\times_YY'$ is isomorphic to $Y'\times F'$ over $Y'$.
\end{thm}

Without the good minimal model assumption, we prove the following.

\begin{thm}\label{thm:sing fiber}
Let $f:X\to Y$ be a parabolic fibration between smooth projective varieties. Assume that $M_Y$ is numerically trivial and $B_Y=0$. Then 
\begin{enumerate}
\item $B_X$ is effective,
\item any prime divisor $Q$ on $X$ with $\codim f(Q)\geq 2$ is a component of $B_X$, and 
\item for any prime divisor $P$ on $Y$, the divisor $f^*P\setminus B_X$ is reduced and irreducible.  
\end{enumerate}
\end{thm}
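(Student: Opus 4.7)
The plan relies on the numerical identity $bK_{X/Y}\equiv B_X$, which follows from $M_Y\equiv 0$ and $B_Y=0$ via Theorem~\ref{thm:fujino-mori}, together with the log canonical threshold condition $t_P=1$ built into part~(2) of that theorem. First I would extract a local numerical inequality: fix a prime $P$ on $Y$ and take a log resolution $g\colon\tilde X\to X$ of $(X,\operatorname{Supp}(B_X)+f^*P)$ over the generic point of $P$. Writing $g^*f^*P=\sum a'_jQ'_j$, $K_{\tilde X/X}=\sum e_jQ'_j$, and $g^*B_X=\sum b'_jQ'_j$ over divisors $Q'_j$ mapping into $P$, the condition $t_P=1$ gives $be_j+b'_j\geq b(a'_j-1)$ for every $j$, with equality for at least one $j$. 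For strict transforms of components $Q_i$ of $f^*P$ on $X$ (where $e_j=0$), this reads $b_i\geq b(a_i-1)\geq 0$. In particular every prime divisor of $X$ dominating a divisor on $Y$ has nonnegative coefficient in $B_X$, so $B_X^-$ is supported over codimension $\geq 2$ in $Y$.

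For part~(1), to upgrade this to $B_X^-=0$, I would restrict $f$ to a general complete-intersection surface $S\subset Y$ passing through a point of $f(B_X^-)$. After resolution, the induced fibration $f_S\colon X_S\to S$ still satisfies $M_S\equiv 0$ and $B_S=0$ (by compatibility of the Fujino-Mori formula with general restriction), and $B_{X_S}^-$ is now vertical over finitely many points of $S$. Applying a Zariski-type negativity lemma to the identity $bK_{X_S/S}\equiv B_{X_S}$ on each surface contracted over such a point should force $B_{X_S}^-=0$, hence $B_X^-=0$ globally. This is the step I expect to present the main technical difficulty: verifying that the restricted canonical bundle formula behaves as expected, and setting up a clean negativity argument in this relative setting, both demand care.

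For part~(2), assume $B_X$ is effective. If $Q$ is a prime divisor with $\codim f(Q)\geq 2$ and $Q\not\subset B_X$, then $f|_Q\colon Q\to f(Q)$ has generic fiber of dimension at least $\dim F+1$, and $Q$ contracts to a lower-dimensional subvariety. I would then locate a curve $C\subset Q$ whose normal bundle $N_{Q/X}|_C$ has negative degree (for instance via bend-and-break, or a ruling on $Q$ induced by the contraction); adjunction on $Q$ yields $bK_X\cdot C<0$, so $B_X\cdot C<0$ by the numerical identity. But $B_X\cdot C\geq 0$ for $C$ general in its deformation family, since $Q\not\subset B_X$ and $B_X\geq 0$, a contradiction; hence $Q\subset B_X$.

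For part~(3), write $f^*P=\sum a_iQ_i$. By the local inequality above, any $Q_i\not\subset B_X$ has $b_i=0$ and thus $a_i=1$, so $f^*P\setminus B_X$ is reduced. For irreducibility, suppose $Q_1\neq Q_2$ are two such components; both are log canonical places of the sub-lc pair $(X,-\tfrac{1}{b}B_X+f^*P)$, which is lc-trivial over $Y$ by the canonical bundle formula. The connectedness principle for lc centers of an lc-trivial fibration (in the spirit of Kollár/Ambro) forces $Q_1$ and $Q_2$ to be joined in the fiber over the generic point of $P$ by a chain of other lc places, necessarily components $Q_j\subset B_X$ with $b_j=b(a_j-1)$. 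A further analysis of these chain configurations, combined with the global constraint $M_Y\equiv 0$ (which via Ambro's generic isotriviality theorem sharply restricts how such reducible-fiber structures can vary in a family), should rule out the existence of such chains, yielding irreducibility.
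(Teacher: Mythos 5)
Your outline replaces each of the paper's three pillars with a step that is stated as a hope ("should force", "should rule out"), and in each case the tool you name does not actually apply. For (1): after reducing to killing $B_X^-$ (which Theorem \ref{thm:fujino-mori}(1) already places over codimension $\geq 2$), you restrict to a surface $S$ through a point of $f(B_X^-)$ and invoke "a Zariski-type negativity lemma" for $bK_{X_S/S}\equiv B_{X_S}$. But every negativity statement over a base (Zariski's lemma, the negativity lemma, or its very-exceptional refinements) requires $-D$ to be nef over the base or some negative-definiteness of an intersection form, and here neither $B_{X_S}$ nor $-B_{X_S}\equiv -bK_{X_S/S}$ has any relative positivity or negativity: the general fiber only has Kodaira dimension $0$, its canonical class need not be numerically trivial. (Also, hyperplane sections forced through a chosen point are not general members, so $M_S\equiv 0$ and $B_S=0$ would themselves need justification.) For (2): adjunction gives $K_X\cdot C=K_Q\cdot C-\deg N_{Q/X}|_C$, so a negative normal bundle degree alone does not yield $bK_X\cdot C<0$; more importantly, there is no reason the (positive-dimensional) fibers of $Q\to f(Q)$ carry moving curves with negative normal degree or are uniruled, and bend-and-break needs a $K_X$-negative curve as input — precisely what you are trying to produce. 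For (3): your reducedness argument via $t_P=1$ is fine, but the connectedness principle is a statement for pairs with effective boundary (or with $-(K+\Delta)$ relatively nef), not for the sub-pair $(X,-\tfrac1b B_X+f^*P)$ whose boundary has a negative part; and even granting connectedness of the non-klt locus over the generic point of $P$, it does not forbid two divisorial lc places joined by a chain, so the decisive exclusion of such configurations is exactly the step you leave unproved.

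For comparison, the paper's route is quite different at each point. Effectivity of $B_X$ (Lemma \ref{lem:Eff of BX}) is a positivity/flatness argument, not a negativity one: $M_Y\equiv 0$ makes $f_*\mathcal{O}(mbK_{X/Y})$ numerically trivial, Viehweg weak positivity plus the Hacon--Popa--Schnell theorem give local freeness and multiplicativity of these sheaves, and the very-exceptional pushforward lemma (Lemmas \ref{lem:criterion of very exceptional}, \ref{lem:very exceptional}) lets one take $c=1$ in the Fujino--Mori construction, so $B_X=E\geq 0$ directly. Part (2) (Lemma \ref{lem:exceptional}) uses a Viehweg-type flattening diagram $f'\colon X'\to Y'$ in which divisors exceptional over $Y'$ are exceptional over $X$, compares the two canonical bundle formulas, and uses that $M_{Y'}-q^*M_Y$ is pseudo-effective and $q$-exceptional, hence effective. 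Part (3) (Lemma \ref{lem:irreducible}) is the heart: cut to a curve, reduce to a $b=1$ semistable fibration via the canonical cover and semistable reduction, and then rule out reducibility by a transcendental input, Theorem \ref{thm:sum of pg} (a Clemens--Schmid consequence: $\deg f_*\omega_{X/C}=0$ forces $\sum_i h^0(K_{X_i})=h^0(K_{X_t})$), combined with Kollár's torsion-freeness of $R^1f_*\omega_X$ and the pushforward lemma. Nothing in your sketch substitutes for this Hodge-theoretic step, which is why the "chain analysis" cannot be waved through; if you want to repair the outline, parts (1)--(2) should be redone along the sheaf-theoretic/flattening lines above, and part (3) needs an input of the strength of Theorem \ref{thm:sum of pg}.
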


The proof uses the following theorem, which is a consequence of the Clemens--Schmid sequence.

\begin{thm}\label{thm:sum of pg}
Let $f:X\to C$ be a semistable fibration from a smooth projective variety $X$ to a smooth projective curve $C$. 

Let $\sum_iX_i$ be a singular fiber and let $X_{t_0}=f^{-1}(t_0)$ be a general fiber. If $\deg f_*O_X(K_{X/C})=0$, then $\sum_i \dim H^0(K_{X_i})=\dim H^0(K_{X_{t_0}})$.
\end{thm}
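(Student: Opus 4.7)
The plan is to prove the two inequalities separately. The upper bound $\sum_i h^0(K_{X_i})\le h^0(K_{X_t})$ follows from Koll\'ar's theorem and holds for any semistable family; the lower bound $\sum_i h^0(K_{X_i})\ge h^0(K_{X_t})$ uses the degree-zero hypothesis together with the Clemens--Schmid exact sequence.

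For the upper bound, Koll\'ar's theorem gives that $R^if_*\omega_X$ is locally free on $C$, and cohomology-and-base-change then yields $H^0(X_0,\omega_{X_0})=(f_*\omega_{X/C})_0\otimes k(0)$, a vector space of dimension $h^0(K_{X_t})$. On the other hand, a tuple $(s_i)_i$ with $s_i\in H^0(X_i,K_{X_i})$ has zero Poincar\'e residue along every double locus $X_{ij}$ and therefore glues to a section of $\omega_{X_0}$; this produces an injection $\bigoplus_i H^0(K_{X_i})\hookrightarrow H^0(X_0,\omega_{X_0})$. Combining the two gives $\sum_i h^0(K_{X_i})\le h^0(K_{X_t})$.

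For the lower bound, Fujita's semipositivity shows that $f_*\omega_{X/C}$ is nef, and with $\deg=0$ it is numerically flat. By the refined Fujita decomposition (Koll\'ar, Catanese--Dettweiler) in the semistable setting, $f_*\omega_{X/C}$ is then a unitary flat vector bundle on $C$. Since it agrees with the top Hodge subbundle $F^n\mathcal{H}^n\subset R^nf_*\mathbb{C}\otimes\mathcal{O}$ over the smooth locus of $f$, the local monodromy around each singular fiber is trivial; equivalently, the nilpotent operator $N$ of the limit mixed Hodge structure vanishes on $F^nH^n_{\mathrm{lim}}$. The Clemens--Schmid exact sequence
\begin{equation*}
\cdots\to H^n(X_0)\xrightarrow{\mathrm{sp}} H^n_{\mathrm{lim}}\xrightarrow{N} H^n_{\mathrm{lim}}(-1)\to\cdots
\end{equation*}
is exact in the category of mixed Hodge structures. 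Passing to $F^n$ by strictness and using $N|_{F^nH^n_{\mathrm{lim}}}=0$, the specialization map $\mathrm{sp}$ surjects onto $F^nH^n_{\mathrm{lim}}$. By Schmid's nilpotent orbit theorem, $\dim F^nH^n_{\mathrm{lim}}=h^0(K_{X_t})$. Finally, Deligne's Mayer--Vietoris spectral sequence for the MHS on $X_0=\sum_i X_i$, combined with the vanishing of $H^{n,0}(X_I)$ for $|I|\ge 2$ (the intersection has dimension $<n$, hence no $(n,0)$-forms), identifies $F^nH^n(X_0)=\bigoplus_i H^0(K_{X_i})$, giving $\sum_i h^0(K_{X_i})\ge h^0(K_{X_t})$.

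The main obstacle I anticipate is the Hodge-theoretic step in the lower bound: showing rigorously that numerical flatness of $f_*\omega_{X/C}$ translates to the vanishing of $N$ on $F^nH^n_{\mathrm{lim}}$. This requires identifying $f_*\omega_{X/C}$ with the Deligne canonical extension of $F^n\mathcal{H}^n$ and checking that the unitary flat structure extends trivially across singular fibers, which is the substance of the refined Fujita decomposition; the remaining manipulations with Clemens--Schmid and Mayer--Vietoris are formal consequences of strictness in Deligne's mixed Hodge theory.
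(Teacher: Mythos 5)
Your proposal is correct, and for the substantive direction it runs on the same engine as the paper, but the packaging is genuinely different, so a comparison is worth recording. The paper does not split the equality into two inequalities: it shows $\deg f_*\omega_{X/C}=0$ forces the curvature of the Hodge metric on $F^n=f_{0*}\omega_{X_0/C_0}$ to vanish (via Kawamata's integral formula and Griffiths semipositivity), hence $F^n$ is a flat subbundle on which the unipotent local monodromy is also norm-preserving and therefore trivial; then $F^n_\infty\subset\ker N\subset W_n$ gives $F^n_\infty=I^{n,0}H^n_\infty$, and the Clemens--Schmid isomorphism $I^{n,0}H^n(X_0)\cong I^{n,0}H^n_\infty$ together with $I^{n,0}H^n(X_0)=\oplus_i H^{n,0}(X_i)$ yields the equality in one stroke. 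You instead prove the inequality $\sum_i h^0(K_{X_i})\le h^0(K_{X_t})$ by a purely algebraic argument (Koll\'ar local freeness plus descending cohomology-and-base-change, adjunction $\omega_{X_0}\cong\omega_{X/C}|_{X_0}$, and the residue description of $\omega_{X_0}$ embedding $\oplus_i H^0(K_{X_i})$ into $H^0(\omega_{X_0})$), which is valid for any semistable fibration and replaces the paper's weight-filtration step $\ker N\subset W_n$; and you prove the reverse inequality by strictness in the Clemens--Schmid sequence after showing $N$ kills $F^nH^n_{\mathrm{lim}}$. For that last vanishing you black-box the second (refined) Fujita decomposition: this works, but you should be careful to quote it in the precise form you need, namely $f_*\omega_{X/C}=A\oplus Q$ with $A$ ample and $Q$ unitary flat, \emph{and} with the flat unitary structure on $Q$ induced by a local subsystem of the Gauss--Manin connection over the smooth locus -- otherwise ``numerically flat'' alone does not give unitarity, and an abstract flat structure unrelated to Gauss--Manin would say nothing about $N$. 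That compatibility (and the triviality of the now unitary-and-unipotent local monodromy on $F^n$) is exactly what the paper establishes by hand with the curvature identity $h(\Theta s,t)=h(\psi s,\psi t)$, so the two routes have the same analytic core; your version buys a cleaner formal structure (an unconditional upper bound plus a strictness argument, avoiding the explicit construction of the weight filtration), while the paper's is more self-contained, resting only on Kawamata's formula and the standard Clemens--Schmid consequences it cites.
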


Parabolic fibrations with $M_Y$ numerically trivial and $B_Y=0$ are closely related to Ueno's conjecture.

\begin{conj}[{Ueno's conjecture K}]\label{conjK}
Let $X$ be a smooth projective variety of Kodaira dimension zero. Let $f:X\to Y$ be the Albanese morphism and let $F$ be a general fiber of $f$. Then 
 \begin{enumerate}
     \item $f$ is a fibration,
     \item $\kappa(F)=0$, and 
     \item there is a finite \'etale cover $Y'\to Y$ such that $X\times_YY'$ is birational to $Y'\times F$ over $Y'$.
 \end{enumerate}
\end{conj}Part \textit{(1)} was proved by Kawamata \cite[Theorem 1]{Kawamata1981Characterization}. Part \textit{(2)} follows as a corollary of Cao-Păun's solution to the Iitaka conjecture in the case where the base is an abelian variety \cite[Theorem 1.1]{CapPaun2017KodairaDimension}. Let $f$ be the parabolic fibration appearing in Conjecture \ref{conjK}, and let $M_Y$ and $B_Y$ denote the moduli divisor and the discriminant divisor of $f$, respectively. By \cite[Theorem 5.2]{HaconPopaSchnell2018}, we have that $M_Y$ is numerically trivial and $B_Y=0$ (cf. \cite[Lemma 2.1]{zhu2024effective}). Consequently, Theorems \ref{thm:MMP to isotrivial} and \ref{thm:sing fiber} apply to this fibration.

\subsection*{Notation and conventions} 
We work over the field $\Cc$ of complex numbers. A surjective projective morphism between normal varieties with connected fibers is called a fibration. A fibration is called parabolic if its general fiber has Kodaira dimension zero.

Let $f:X\to Y$ be a fibration between normal varieties. Let $D$ be an effective $\Qq$-divisor. Then we can write $D=D^h+D^v$, where $D^v$ does not dominate $Y$, and every component of $D^h$ dominates $Y$. We call $D$ a vertical divisor (resp. horizontal divisor) if $D=D^v$ (resp. $D=D^h$). If $D$ is vertical, it is called exceptional over $Y$ if $\codim f(D)\geq2$, and very exceptional over $Y$ if for any prime divisor $P\subset Y$, there exists a prime divisor $Q\subset X$ such that $Q$ dominates $P$ but $Q\not\subset D$.

\section{Proof of Theorem \ref{thm:MMP to isotrivial}}

\begin{lem}[{\cite[Lemma 2.15]{Gongyo2013Reduction}}]\label{lem:criterion of very exceptional}
Let $f:X\to Y$ be a fibration between normal varieties. Assume that $Y$ is smooth. Let $D$ be a vertical effective $\Qq$-divisor. If $(f_*O_X(\lf mD \rf))^{\wedge}=O_Y$ (where $\wedge$ denotes the reflexive hull) for all sufficiently divisible positive integers $m$, then $D$ is very exceptional over $Y$.
\end{lem}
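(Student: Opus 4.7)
The plan is to argue by contradiction. Suppose $D$ fails to be very exceptional over $Y$: then there is a prime divisor $P\subset Y$ such that every prime divisor $Q\subset X$ dominating $P$ lies in the support of $D$. List these divisors as $Q_1,\dots,Q_r$ and write $D=\sum_{i=1}^r c_iQ_i+D''$ with $c_i>0$ and no $Q_i$ among the components of $D''$. The idea is to produce, for some sufficiently divisible $m$, an explicit rational function that is a section of $f_*O_X(\lf mD\rf)$ near the generic point $\eta_P$ of $P$ but does not lie in $O_Y$; since $f_*O_X(\lf mD\rf)$ is a rank-one torsion-free $O_Y$-submodule of its reflexive hull $(f_*O_X(\lf mD\rf))^{\wedge}=O_Y$, both viewed inside the constant sheaf $K(Y)$, such a section yields a contradiction.

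Using that $Y$ is smooth, pick an affine neighborhood $V$ of $\eta_P$ on which $P$ is defined by a single equation $t\in O_Y(V)$. Decompose the pullback as $f^*P=\sum_{i=1}^r a_iQ_i+E$, where $E$ collects the components of $f^*P$ with image of codimension $\geq 2$ in $Y$. Shrink $V$ around $\eta_P$ to avoid the (finitely many) images of the components of $E$ and of the components of $D''$, each of which is a proper closed subset of $Y$ not containing $\eta_P$ (for $D''$ this uses that its components are vertical and do not dominate $P$). After shrinking, restricted to $f^{-1}V$ the divisor $f^*P$ becomes $\sum a_iQ_i$ and $D$ becomes $\sum c_iQ_i$, so the rational function $1/t\in K(Y)\subset K(X)$ has divisor $-\sum a_iQ_i$ on $f^{-1}V$.

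For $m$ sufficiently divisible so that $\lf mc_i\rf\geq a_i$ for every $i$ (possible because each $c_i>0$), we obtain $\operatorname{div}(1/t)+\lf mD\rf\geq 0$ on $f^{-1}V$, hence $1/t\in (f_*O_X(\lf mD\rf))(V)$. Since $1/t\notin O_Y(V)$, the stalk of $f_*O_X(\lf mD\rf)$ at $\eta_P$ strictly contains $O_{Y,\eta_P}$. This contradicts the hypothesis, because a rank-one torsion-free coherent sheaf and its reflexive hull agree at every codimension-one point, so the reflexive-hull condition forces $(f_*O_X(\lf mD\rf))_{\eta_P}=O_{Y,\eta_P}$.

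The main obstacle is the geometric shrinking step: one must justify that after restricting to a small enough $V$ around $\eta_P$, the only components of $D$ and of $f^*P$ meeting $f^{-1}V$ are the dominating $Q_i$. This reduces to the elementary but crucial point that non-$P$-dominating vertical components and $f$-exceptional components of $f^*P$ all have images whose closures miss $\eta_P$, together with the smoothness of $Y$ (which makes $P$ locally principal and provides the function $1/t$).
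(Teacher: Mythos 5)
Your proof is correct and follows essentially the same route as the paper: argue by contradiction from the failure of very exceptionality, use that all $P$-dominating components of $f^*P$ lie in $D$ so that (after scaling $m$ so $\lf mc_i\rf\geq a_i$) a local equation of $P$ gives a pole allowed by $\lf mD\rf$, and contradict the hypothesis at the codimension-one point $\eta_P$, where the pushforward and its reflexive hull agree. The paper phrases this globally as $O_Y(P)\subset f_*O_X(m(E+D))=(f_*O_X(\lf mD\rf))^{\wedge}=O_Y$, while you localize at $\eta_P$ with the explicit section $1/t$; your version even makes the needed scaling of $D$ explicit, which the paper leaves implicit.
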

\begin{proof}
If $D$ is not very exceptional over $Y$, then there is a prime divisor $P\subset Y$, an effective divisor $E$ that is exceptional over $Y$, and a positive integer $N$, such that $f^*P\leq N(E+D)$. We may assume that $N(E+D)$ is a $\Zz$-divisor. Then $O_Y(mP)\subset f_*O_X(mN(E+D))\subset(f_*O_X(mND))^{\wedge}=O_Y$ for all sufficiently divisible positive integers $m$, which is a contradiction.
\end{proof}

\begin{lem}\label{lem:very exceptional}
Let $f:X\to Y$ be a fibration between normal varieties with a general fiber $F$. Assume that $Y$ is smooth. Let $D_1$ be an effective horizontal $\Zz$-divisor on $X$ such that $\dim H^0(D_1|_F)=1$, let $D_2$ be an effective very exceptional $\Qq$-divisor on $X$, and let $L$ be a $\Qq$-divisor on $Y$. Then $f_*O_X(\lf f^*L+D_1+D_2\rf)=O_Y(\lf L\rf)$.

\end{lem}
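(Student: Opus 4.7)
The plan is to check that $f_*O_X(\lf f^*L+D_1+D_2\rf)$ and $O_Y(\lf L\rf)$ coincide as subsheaves of the relevant function fields, by verifying both inclusions.

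For $O_Y(\lf L\rf)\subseteq f_*O_X(\lf f^*L+D_1+D_2\rf)$, given $g\in O_Y(\lf L\rf)(U)$, from $\operatorname{div}(g)+\lf L\rf\ge 0$ and $\{L\}\ge 0$ I obtain $\operatorname{div}(g)+L\ge 0$. Pulling back by $f$ (valid since $L$ is $\Qq$-Cartier) and adding the effective $D_1+D_2$ yields $\operatorname{div}(f^*g)+f^*L+D_1+D_2\ge 0$; since $\operatorname{div}(f^*g)$ is integral, this upgrades to $\operatorname{div}(f^*g)+\lf f^*L+D_1+D_2\rf\ge 0$, so $f^*g$ lies in the target sheaf.

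For the reverse inclusion, take $s\in f_*O_X(\lf f^*L+D_1+D_2\rf)(U)$ viewed as a rational function on $X$, and first descend it. On a general fiber $F$, the vertical divisors $f^*L$ and $D_2$ restrict to zero and $D_1$ is integral, so $(\lf f^*L+D_1+D_2\rf)|_F=D_1|_F$; hence $s|_F\in H^0(F,O_F(D_1|_F))$. The hypothesis that this space is one-dimensional, together with effectivity of $D_1$ (which places the constant function $1$ in it), forces $s|_F$ to be a literal constant for $F$ general. Connectedness of the fibers of $f$ then yields $s=f^*g$ for some $g\in K(Y)$.

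It remains to verify $g\in O_Y(\lf L\rf)(U)$, and this is where the very exceptional property of $D_2$ enters. For any prime divisor $P\subseteq U$, very exceptionality supplies a prime divisor $Q\subseteq X$ with $f(Q)=P$ and $Q\not\subseteq D_2$; since $D_1$ is horizontal and $Q$ is vertical, automatically $Q\not\subseteq D_1$. Writing $m_Q$ for the multiplicity of $Q$ in $f^*P$, the coefficient of $Q$ in $\lf f^*L+D_1+D_2\rf$ is $\lf m_Q\operatorname{coef}_P L\rf$, while that of $\operatorname{div}(f^*g)$ is $m_Qv_P(g)$. The inequality $\operatorname{div}(s)+\lf f^*L+D_1+D_2\rf\ge 0$ at $Q$ becomes $m_Qv_P(g)+\lf m_Q\operatorname{coef}_P L\rf\ge 0$, forcing $v_P(g)\ge -\operatorname{coef}_P L$; integrality of $v_P(g)$ then upgrades this to $v_P(g)+\lf\operatorname{coef}_P L\rf\ge 0$, as required. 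The main obstacle is the descent step, which crucially uses both the effectivity of $D_1$ and the one-dimensionality of $H^0(D_1|_F)$ to pin $s|_F$ down as a constant function on the general fiber; once descent is in hand, the rest is a mechanical floor computation at one vertical prime over each $P$.
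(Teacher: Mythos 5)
Your proof is correct and follows essentially the same route as the paper's: restrict a section to a general fiber, use $\dim H^0(D_1|_F)=1$ (plus effectivity of $D_1$) to see it is constant and hence descends to a rational function on $Y$, then use very exceptionality of $D_2$ and horizontality of $D_1$ to check the coefficient inequality at each prime divisor $P$ via a vertical component $Q$ not contained in $D_1+D_2$. You additionally spell out the easy inclusion $O_Y(\lf L\rf)\subseteq f_*O_X(\lf f^*L+D_1+D_2\rf)$ and the valuation/floor bookkeeping that the paper leaves implicit, which is fine.
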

\begin{proof}
Let $U\subset Y$ be an affine open subset, and let $\phi\in H^0(f^{-1}U,f^*L+D_1+D_2)\subset\Cc(X)$ be a section. Then \begin{align*}
\di_X(\phi)+f^*L+D_1+D_2|_{f^{-1}U}\geq0.    
\end{align*} By restricting to $F$, we have 
\begin{align*}
\di_X(\phi)+D_1|_F\geq0.    
\end{align*} Since $\dim H^0(D_1|_F)=1$, $\phi|_F$ is a constant, which means that $\phi=f^*\psi$ for some $\psi\in\Cc(Y)$. Then by \begin{align*}
f^*\di_Y(\psi)+f^*L+D_1+D_2|_{f^{-1}U}\geq0,    
\end{align*} we have \begin{align*}
\di_Y(\psi)+L|_U\geq0,    
\end{align*} so $\psi\in H^0(U,\lf L\rf)$. Hence $f_*O_X(\lf f^*L+D_1+D_2\rf)\subset O_Y(\lf L\rf)$. On the other hand, we have \[O_Y(\lf L\rf)=f_*O_X(f^*\lf L\rf)\subset f_*O_X(\lf f^* L\rf) \subset f_*O_X(\lf f^* L+D_1+D_2\rf).\] Hence the Lemma follows.
\end{proof}

\begin{lem}\label{lem:Eff of BX}
Let $f:X\to Y$ be a parabolic fibration between smooth projective varieties. If $M_Y\equiv0$ and $B_Y=0$, then 
\begin{enumerate}
    \item $bM_Y$ is a $\Zz$-divisor, 
    \item $O_Y(mbM_Y)=f_*O_X(mbK_{X/Y})$ for all $m\in\Zz_{>0}$, and 
    \item $B_X$ is an effective $\Zz$-divisor.
\end{enumerate}
\end{lem}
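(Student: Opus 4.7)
My plan is to push the identity $bK_{X/Y}=f^*bM_Y+B_X$ forward along $f$ and combine it with Theorem~\ref{thm:fujino-mori}(1) together with Lemmas~\ref{lem:very exceptional} and~\ref{lem:criterion of very exceptional}. Write $B_X=B_X^+-B_X^-$; since $B_X^-$ is vertical with $\codim f(B_X^-)\geq 2$, it is exceptional and hence very exceptional over $Y$. On a general fiber $F$, $B_X^+|_F=bK_F$ satisfies $\dim H^0(mbK_F)=1$ for every $m\geq 1$ by the definition of $b$.

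I would first attack the effectiveness in (3), namely showing $B_X^-=0$. Pick $m$ sufficiently divisible so that $mbM_Y$ and $mB_X$ are both integer divisors, and consider the inclusion
\[
f_*O_X(mbK_{X/Y})\subseteq f_*O_X(mbK_{X/Y}+mB_X^-)=f_*O_X(f^*mbM_Y+mB_X^+).
\]
Decomposing $mB_X^+$ into its horizontal and vertical components, Lemma~\ref{lem:criterion of very exceptional} combined with Theorem~\ref{thm:fujino-mori}(1) (after separating out the horizontal part, which restricts to $mbK_F$ on a general fiber with $\dim H^0=1$) shows that the vertical part of $mB_X^+$ is very exceptional. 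Lemma~\ref{lem:very exceptional} then yields $f_*O_X(mbK_{X/Y}+mB_X^-)=O_Y(mbM_Y)$. The cokernel of the displayed inclusion is therefore supported on $f(B_X^-)$, a subset of codimension $\geq 2$; combining the numerical triviality $M_Y\equiv 0$ with Fujita-type semi-positivity of the rank-one Hodge-theoretic sheaf $f_*O_X(mbK_{X/Y})$ forces this cokernel to vanish, whence $B_X^-=0$.

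Once effectiveness is in hand, parts (1) and (2) follow formally: applying Lemma~\ref{lem:very exceptional} directly to $mbK_{X/Y}=f^*mbM_Y+mB_X$ (with $B_X$ now effective, split into its horizontal and vertical parts) yields $f_*O_X(mbK_{X/Y})=O_Y(\lfloor mbM_Y\rfloor)$, which must be a genuine line bundle on the smooth variety $Y$. Hence $mbM_Y$ is an integer Weil divisor for every $m\geq 1$, proving (1) (on taking $m=1$) and (2). Integrality of $B_X$ then follows from $B_X=bK_{X/Y}-f^*bM_Y$ being a difference of integer divisors, completing (3).

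The hardest step will be the cokernel-vanishing argument in the second paragraph: because $B_X^-$ lives over a locus of codimension $\geq 2$ and is invisible to both generic-fiber restriction and hyperplane-section arguments, ruling it out requires the global numerical input $M_Y\equiv 0$ together with the positivity of the rank-one sheaf $f_*O_X(mbK_{X/Y})$ in an essential way, and I expect to need a careful local analysis near a generic point of each component of $f(B_X^-)$ to turn ``numerical triviality plus nefness'' into a genuine sheaf-level identity.
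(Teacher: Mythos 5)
Your formal reductions are fine and run parallel to part of the paper's own argument: extracting very exceptionality of the vertical part of $B_X^{+}$ from Theorem \ref{thm:fujino-mori}(1) and Lemma \ref{lem:criterion of very exceptional}, and computing $f_*O_X(mbK_{X/Y}+mB_X^{-})=O_Y(mbM_Y)$ via Lemma \ref{lem:very exceptional}, is exactly the kind of bookkeeping the paper does. The genuine gap is the step you yourself flag as hardest: the vanishing of the cokernel of $f_*O_X(mbK_{X/Y})\subseteq O_Y(mbM_Y)$. That vanishing is the entire non-formal content of the lemma, and the mechanism you propose --- numerical triviality of $M_Y$ plus ``Fujita-type semi-positivity'' of the rank-one sheaf --- cannot deliver it. Weak positivity in Viehweg's sense (and any nefness notion for torsion-free sheaves) is a condition at the generic point, hence blind to cosupport in codimension two: the sheaf $I_Z\subseteq O_Y$, for $Z$ of codimension two, is weakly positive with numerically trivial reflexive hull, yet the cokernel $O_Z$ is nonzero. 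A priori $f_*O_X(mbK_{X/Y})$ could have exactly this shape $O_Y(mbM_Y)\otimes I_Z$, and no amount of numerical or generic positivity rules that out. The paper closes precisely this gap by invoking \cite[Theorem 4.5]{HaconPopaSchnell2018}, which says that such pushforwards of relative pluricanonical bundles, once numerically trivial, are locally free (hence saturated in their reflexive hulls); this rests on metric positivity of relative pluricanonical sheaves, not on formal numerical arguments. The paper combines it with the Fujino--Mori multiplicativity $((f_*O(cbK_{X/Y}))^{\wedge})^{\otimes m}=(f_*O(mcbK_{X/Y}))^{\wedge}$, Viehweg's weak positivity, and then the evaluation map $f^*f_*O(bK_{X/Y})\to O(bK_{X/Y})$, which exhibits $B_X$ as the automatically effective divisor $E$ with $O(bK_{X/Y})=f^*f_*O(bK_{X/Y})\otimes O(E)$. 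Without an input of this strength, your second paragraph is an assertion, not a proof.

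There is also a flaw in how you deduce (1) and (2). From $f_*O_X(mbK_{X/Y})=O_Y(\lfloor mbM_Y\rfloor)$ you conclude that $mbM_Y$ is integral because the right-hand side ``must be a genuine line bundle on the smooth variety $Y$''; but $O_Y(\lfloor mbM_Y\rfloor)$ is a line bundle whether or not $mbM_Y$ is integral --- the floor has already made the divisor integral --- so no integrality follows. In the paper, the integrality of $bM_Y$, statement (2) for every positive integer $m$ (not just sufficiently divisible $m$), and the $\Zz$-integrality of $B_X$ all come from showing that the integer $c$ in the Fujino--Mori construction, for which $O(cbM_Y)=(f_*O(cbK_{X/Y}))^{\wedge}$, can be taken to be $1$; this uses $f_*O(mE)=O_Y$ for all $m>0$, obtained from Lemmas \ref{lem:criterion of very exceptional} and \ref{lem:very exceptional}. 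Finally, a minor point: your application of Lemma \ref{lem:very exceptional} to $mbK_{X/Y}=f^*(mbM_Y)+mB_X$ for every $m$ requires the horizontal part of $mB_X$ to be a $\Zz$-divisor with $h^0=1$ on the general fiber, which you do not yet know at that stage; so even granting effectiveness, parts (1)--(3) do not follow ``formally'' along the route you describe.
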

\begin{proof}

Consider the non-zero morphism
$f^*f_*O_X(bK_{X/Y})\to O_X(bK_{X/Y})$. We claim that $f_*O_X(bK_{X/Y})$ is locally free, and if $E$ is the effective $\Zz$-divisor such that \begin{align}\label{equ:E}O_X(bK_{X/Y})=f^*f_*O_X(bK_{X/Y})\otimes O_X(E)
\end{align}
holds, then the vertical part of $E$ is very exceptional over $Y$. 

Assume that this claim holds. Then by Lemma \ref{lem:very exceptional}, we have \[f_*O_X(mbK_{X/Y})=(f_*O_X(bK_{X/Y}))^{\otimes m}\otimes f_*O_X(mE)=(f_*O_X(bK_{X/Y}))^{\otimes m}\text{ for all }m\in\Zz_{>0}.\] By the construction of $M_Y$ and $B_Y$ (see \cite[Proposition 2.2]{FujinoMori2000}), we have $O_Y(bM_Y)=f_*O(bK_{X/Y})$, $B_X$ is just the effective $\Zz$-divisor $E$, and the canonical bundle formula is just a rewrite of \ref{equ:E}. 

We now prove the claim. By the construction of $M_Y$ and $B_Y$, there is a positive integer $c$, such that \begin{align*}
((f_*O_X(cbK_{X/Y}))^{\wedge})^{\otimes m}&= (f_*O_X(mcbK_{X/Y}))^{\wedge} \text{ for all }m\in\Zz_{>0}, 
\end{align*}
 and $
O_Y(cb(M_Y+B_Y))=(f_*O_X(cbK_{X/Y}))^{\wedge}$. Since $M_Y\equiv0$ and $B_Y=0$,  we have $(f_*O_X(mcbK_{X/Y}))^{\wedge}\equiv0$ for all $m\in\Zz_{>0}$. Consider the non-zero morphism
\begin{align*}
((f_*O_X(bK_{X/Y}))^{\wedge})^{\otimes mc}\to (f_*O_X(mcbK_{X/Y}))^{\wedge}.    
\end{align*} Since $(f_*O_X(mcbK_{X/Y}))^{\wedge}\equiv0$ and $(f_*O_X(bK_{X/Y}))^{\wedge}$ is pseudo-effective by \cite[Theorem 3]{Viehweg1983WeakPositivity}, we have \begin{align*}
((f_*O_X(bK_{X/Y}))^{\wedge})^{\otimes mc}=f_*O_X(mcbK_{X/Y})^{\wedge}\equiv0.    
\end{align*} By \cite[Corollary 4.6]{HaconPopaSchnell2018}, the torsion-free sheaves $f_*O_X(bK_{X/Y})$, $f_*O_X(mcbK_{X/Y})$ are in fact locally free. Hence \begin{align}\label{equ:1}
f_*O_X(mcbK_{X/Y})=(f_*O_X(bK_{X/Y}))^{\otimes mc}\text{ for all }m\in\Zz_{>0}.
\end{align}
By \ref{equ:1} and \ref{equ:E}, we have  
\begin{align*}
f_*O_X(mcbK_{X/Y})&=(f_*O_X(bK_{X/Y}))^{\otimes mc}\otimes f_*O_X(mcE)=f_*O_X(mcbK_{X/Y})\otimes f_*O_X(mcE)     
\end{align*}for all $m\in\Zz_{>0}$. Hence $f_*O_X(mcE)=O_Y$ for all $m\in\Zz_{>0}$. By Lemma \ref{lem:criterion of very exceptional}, the vertical part $E^v$ of $E$ is very exceptional. 
\end{proof}

\begin{lem}\label{lem:mmp for fibration}
Let $f:X\to Y$ be a fibration between smooth projective varieties. Let $\pi:X\dashrightarrow X'$ be a good minimal model of $X$ over $Y$ and let $f':X'\to Y$ be the induced fibration. Then for general $y\in Y$, the fiber $X'_y$ has terminal singularities, the rational map $X_y\dashrightarrow X'_y$ is $K_{X_y}$-non-positive (see \cite[Definition 3.6.1]{BCHM}), and $K_{X'_y}$ is semi-ample.
\end{lem}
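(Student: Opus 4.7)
The plan is to take a common resolution of $\pi$ and transfer all three conclusions to a general fiber via generic smoothness and a Bertini-type statement for terminal singularities.

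First I would choose a common resolution $p\colon W\to X$, $q\colon W\to X'$ of $\pi$ with $W$ smooth. Since $X$ is smooth and $\pi$ is a good minimal model over $Y$, standard MMP theory yields three properties of $X'$ over $Y$: $X'$ has terminal singularities, $K_{X'}$ is $f'$-semi-ample, and $\pi$ is $K_X$-non-positive, meaning $p^*K_X=q^*K_{X'}+E$ for some effective $q$-exceptional $\Qq$-divisor $E$.

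Next I would descend everything to a general fiber. For general $y\in Y$, generic smoothness (using smoothness of $Y$) gives that $X_y$ and $W_y$ are smooth and that $p$, $q$ restrict to birational morphisms $p_y\colon W_y\to X_y$ and $q_y\colon W_y\to X'_y$. The key input is that $X'_y$ has terminal singularities; this is a Bertini-type statement provable via openness of the terminal locus in families over a smooth base, which is standard in the MMP literature. Granting this, the $K_{X_y}$-non-positivity follows by restricting $p^*K_X=q^*K_{X'}+E$ to $W_y$: since $Y$ is smooth at $y$, adjunction gives $K_W|_{W_y}=K_{W_y}$, $K_X|_{X_y}=K_{X_y}$, and $K_{X'}|_{X'_y}=K_{X'_y}$, so we obtain $p_y^*K_{X_y}=q_y^*K_{X'_y}+E|_{W_y}$ with $E|_{W_y}\geq0$. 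For semi-ampleness, pick $m>0$ with $|mK_{X'/Y}|$ base-point-free over $Y$; restricting the induced morphism over $Y$ to the fiber shows that $|mK_{X'_y}|$ is base-point-free, so $K_{X'_y}$ is semi-ample.

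The main obstacle is the Bertini-type statement for terminal singularities of the general fiber $X'_y$; this is standard in the MMP literature but must be invoked carefully, since terminality is more delicate under restriction than, e.g., canonical or klt singularities.
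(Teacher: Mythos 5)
Your proposal is correct and follows essentially the same route as the paper: the common resolution with $p^*K_X=q^*K_{X'}+E$ restricted to a general fiber gives $K_{X_y}$-non-positivity, the general fiber of the terminal variety $X'$ is terminal, and relative semi-ampleness of $K_{X'}$ restricts (via surjectivity of $f'^*f'_*O(mK_{X'})\to O(mK_{X'})$ on the fiber, which the paper makes explicit with Grauert's theorem and adjunction) to semi-ampleness of $K_{X'_y}$. The only difference is presentational: the paper asserts the terminality of the general fiber in one line, whereas you flag it as the delicate point.
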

\begin{proof}
Since $X'$ has terminal singularities, the general fiber $X'_y$ has terminal singularities. Let $X\xleftarrow{p}W\xrightarrow{q}X'$ be a common resolution. Write 
\begin{align*}
p^*K_{X}=q^*K_{X'}+E,    
\end{align*} where $E$ is an effective exceptional over $X'$ divisor. By restricting to the fiber $X'_y$, we have 
\begin{align*}
p^*K_{X_y}=q^*K_{X'_y}+E|_{W_y}.    
\end{align*} Since $y$ is general, $E|_{W_y}$ is effective. Hence the rational map $X_y \dashrightarrow X'_y$ is $K_{X_y}$-non-positive.

By the relative semi-ampleness of $K_{X'}$, there exists a positive integer $m$ such that the morphism $f'^*f'_*O_{X'}(mK_{X'})\to O_{X'}(mK_{X'})$ is surjective. By restricting to the fiber $X'_{y}$, we have the following commutative diagram
\begin{center}
\begin{tikzcd}
f'^*f'_*O_{X'}(mK_{X'}) \arrow[d] \arrow[r]   & O_{X'}(mK_{X'}) \arrow[d] \\
f'^*f'_*O_{X'}(mK_{X'})|_{X'_y}\arrow[r] & O_{X'}(mK_{X'})|_{X'_y}  
\end{tikzcd}
\end{center}
For a Zariski open subset $U\subset Y$, the map $y\in U\mapsto \dim H^0(mK_{X'_y})=\dim H^0(mK_{X_y})$ is constant, hence $f'_*O_{X'}(mK_{X'})\otimes\Cc(y)=H^0(mK_{X'_y})$ and $f'^*f'_*O_{X'}(mK_{X'})|_{X'_y}\cong H^0(mK_{X'_y})\otimes O_{X'_y}$ for general $y\in Y$ by Grauert's theorem. By adjunction, we have $O_{X'}(mK_{X'})|_{X'_y}\cong O_{X'_y}(mK_{X'_y})$. Hence the morphism $H^0(mK_{X'_y})\otimes O_{X'_y} \to O_{X'_y}(mK_{X'_y}) $ is surjective, i.e., $K_{X'_y}$ is semi-ample.
\end{proof}

\begin{lem}\label{lem:mmp for parabolic}
Let $f:X\to Y$ be a parabolic fibration. Let $\pi:X\dashrightarrow X'$ be a good minimal model of $X$ over $Y$. Then the parabolic fibrations $f:X\to Y$ and $f':X'\to Y$ have the same moduli and discriminant divisors, and $B_{X'}=\pi_*B_X$. If $B_X$ is effective, then $\pi$ contracts exactly $B_X$.
\end{lem}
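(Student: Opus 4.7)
My plan is to handle the three assertions in turn using a smooth common resolution $p\colon W\to X$, $q\colon W\to X'$ of $\pi$, so that $f\circ p=f'\circ q$, and to compare the two Fujino--Mori formulas after pulling them back to $W$.

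The invariance of $M_Y$ and $B_Y$ is the standard birational functoriality of the canonical bundle formula from \cite{FujinoMori2000}: these divisors are intrinsic to the base $Y$ and unchanged by birational modifications of the total space, so they coincide for $X/Y$, $W/Y$, and $X'/Y$. Given this, $B_{X'}=\pi_*B_X$ follows by subtracting the two pulled-back formulas on $W$: the $f^*$-terms cancel, leaving
\begin{align*}
p^*B_X-q^*B_{X'}=b(p^*K_X-q^*K_{X'})=:bF.
\end{align*}
By the negativity lemma for MMP, $F$ is effective and $q$-exceptional. Because $\pi$ extracts no divisors, every $p$-exceptional divisor on $W$ is also $q$-exceptional, so $q_*p^*B_X=\pi_*B_X$ and $q_*F=0$; this yields $\pi_*B_X=B_{X'}$.

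For the final claim, assume $X'$ is a good minimal model and $B_X\geq 0$. I need both containments between the $\pi$-exceptional divisors on $X$ and $\operatorname{Supp}(B_X)$. Applying $p_*$ to $p^*B_X=q^*B_{X'}+bF$ and writing $p_*q^*B_{X'}=\pi^{-1}_*B_{X'}+G$ with $G\geq 0$ supported on $\pi$-exceptional divisors, I obtain $B_X-\pi^{-1}_*B_{X'}=G+bp_*F$, whose left-hand side is the $\pi$-exceptional part of $B_X$. By the refined negativity lemma, $F$ contains the strict transform of every $\pi$-contracted divisor with positive coefficient, so every $\pi$-exceptional divisor will appear in $B_X$ with positive multiplicity. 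Conversely, $\operatorname{Supp}(B_X)\subseteq\{\pi\text{-exceptional divisors}\}$ reduces to showing $B_{X'}=0$.

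To show $B_{X'}=0$, I first restrict to a general fibre: $B_{X'}|_{X'_y}\sim bK_{X'_y}$, and by Lemma~\ref{lem:mmp for fibration} $X'_y$ is a good minimal model of the general fibre with Kodaira dimension zero, so $K_{X'_y}$ is torsion and $bK_{X'_y}\sim 0$; combined with $B_{X'}\geq 0$ this forces $B_{X'}|_{X'_y}=0$, so $B_{X'}$ is purely vertical. Next, $\tfrac{1}{b}B_{X'}=K_{X'}-f'^*(K_Y+M_Y+B_Y)$ is $f'$-semi-ample (inherited from $K_{X'}$), effective, and vertical, so the relative base-point-free linear system $|mB_{X'}/b|_{f'}$ contracts general fibres and factors through $f'$; hence $B_{X'}=f'^*D$ for some effective $\Qq$-divisor $D$ on $Y$ (after correcting by a principal divisor pulled back from $Y$). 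Property (1) of Theorem~\ref{thm:fujino-mori} then gives $O_Y=f'_*O(\lf mB_{X'}\rf)=O_Y(\lf mD\rf)$ for all $m\geq 1$, forcing $D=0$ and hence $B_{X'}=0$. I expect the hardest step to be the pullback characterisation of the effective vertical $f'$-semi-ample $\Qq$-divisor $B_{X'}$; the other arguments are formal consequences of MMP negativity and the functoriality of Fujino--Mori.
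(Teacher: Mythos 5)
The decisive step of your final part --- showing $B_{X'}=0$ --- has a genuine gap. You argue: $\tfrac1b B_{X'}$ is effective, vertical and $f'$-semi-ample, ``so the relative base-point-free system contracts general fibres and factors through $f'$; hence $B_{X'}=f'^*D$ for an effective $\Qq$-divisor $D$ on $Y$.'' That implication is false. A relatively semi-ample vertical divisor only gives a morphism $\phi\colon X'\to Z$ over $Y$ with $Z\to Y$ birational (the general fibres are contracted, and Stein factorization makes $Z\to Y$ birational), and $mB_{X'}/b=\phi^*A$ with $A$ ample over $Y$ on $Z$; such an $A$ need not descend to $Y$. Concretely, take $Z\to Y=\mathbb{P}^2$ the blow-up of a point with exceptional curve $E$, $X'=Z\times E'$ for an elliptic curve $E'$, and the divisor $\mathrm{pr}_Z^*(H-E)$ (strict transform of a line through the centre): it is effective, vertical over $Y$, and relatively semi-ample over $Y$, but is not $f'^*$ of any $\Qq$-divisor on $Y$ since $(H-E)\cdot E\neq 0$. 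So verticality plus relative semi-ampleness alone cannot yield the pullback statement on which your use of $f'_*O(\lf mB_{X'}\rf)=O_Y$ rests. The missing ingredient is precisely that $B_{X'}$ is \emph{very exceptional} over $Y$, which follows from $f'_*O(\lf mB_{X'}\rf)=O_Y$ via Lemma \ref{lem:criterion of very exceptional}; once you have that, you can either run a negativity argument on $Z$ (an effective $\nu$-exceptional divisor cannot be $\nu$-ample unless zero), or, as the paper does, note that $B_{X'}\subset\Bb_-(K_{X'}/Y)$ by Lai's lemma for very exceptional divisors, which is empty because $K_{X'}$ is relatively semi-ample. Your argument becomes correct once this extra input is inserted, but as written the step fails.

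Two smaller points. First, the opening claim that $M_Y$ and $B_Y$ are ``unchanged by birational modifications of the total space'' is not something you can simply cite from Fujino--Mori: their functoriality is for higher birational models, whereas $X'$ is a lower model obtained by contracting divisors. One needs the observation (which the paper makes) that $\pi$ is $K_X$-non-positive, so $f_*O(mK_{X/Y})=f'_*O(mK_{X'/Y})$, which gives equality of $M_Y+B_Y$ and hence $B_{X'}=\pi_*B_X$; equality of the discriminants then follows because $(X,-\tfrac1b B_X)$ and $(X',-\tfrac1b B_{X'})$ are crepant over $Y$, so the relevant log canonical thresholds over the generic points of prime divisors of $Y$ agree. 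Second, your negativity argument that every $\pi$-contracted divisor appears in $B_X$ is fine and is a legitimate alternative to the paper's extremal-ray computation ($\Gamma\cdot K_{X_1}=\Gamma\cdot\tfrac1b B_{X_1}<0$ at each step), but note that the latter also yields the containment you get, so the real content of the last assertion in both approaches is the vanishing $B_{X'}=0$ discussed above.
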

\begin{proof}
Since $\pi$ is $K_X$-negative, we have $f_*O_X(mK_{X/Y})=f'_*O_{X'}(mK_{X'/Y})$ for all sufficiently divisible integers $m$. Hence, the sum of moduli and discriminant divisors of $f$ and $f'$ is the same, which implies that $B_{X'}=\pi_*B_X$. For a prime divisor $P\subset Y$, $K_X-\frac{1}{b}B_X+t_Pf^*P$ is numerically trivial over $Y$. Hence the two pairs $(X,-\frac{1}{b}B_X,t_Pf^*P)$, $(X',-\frac{1}{b}B_{X'},t_Pf'^*P)$ are crepant birational to each other. Thus $\lct(X,-\frac{1}{b}B_X,f^*P)=\lct(X',-\frac{1}{b}B_{X'},f'^*P)$, so $f$ and $f'$ have the same discriminant and moduli divisors.

From now on, we assume that $B_X$ is effective. Then $B_{X'}=\pi_*B_X$ is effective. By Lemma \ref{lem:mmp for fibration}, we have $K_{X'_y}\sim_{\Qq}0$ since $\kappa(X'_y)=0$. Then $B_{X'}$ is vertical by $\frac{1}{b}B_{X'}|_{X'_y}=K_{X'_y}\sim_{\Qq}0$. Since the vertical divisor $B_{X'}$ is very exceptional and semi-ample over $Y$, we have $B_{X'}=0$ \cite[Lemma 2.10]{Lai2011Varieties}. By $K_{X}=\pi^*K_{X'}+B_{X}$ and $\pi$ is $K_X$-negative, $\pi$ contracts exactly the divisor $B_X$.
\end{proof}

\begin{proof}[Proof of Theorem \ref{thm:MMP to isotrivial}]
By Lemmas \ref{lem:Eff of BX} and \ref{lem:mmp for parabolic}, for the parabolic fibration $f'$, we have $M_Y\equiv0$, $B_Y=0$, and $B_{X'}=0$. Hence, the theorem follows from \cite[Theorem 4.7 and Proposition 4.4]{Ambro2005TheModuli}.
\end{proof}

\section{Proof of Theorem \ref{thm:sing fiber}}

\begin{lem}\label{lem:exceptional}
Let $f:X\to Y$ be a parabolic fibration between smooth projective varieties. If $M_Y\equiv0$ and $B_Y=0$, then any $f$-exceptional divisor is contained in $B_X$. 
\end{lem}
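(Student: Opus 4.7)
The plan is to argue by contradiction: suppose $Q$ is $f$-exceptional with $Q \not\subset \mathrm{Supp}(B_X)$ and derive a contradiction, using the tools already assembled in the preceding section.

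By Lemma \ref{lem:Eff of BX}, $B_X$ is effective and there is a distinguished isomorphism $O_X(bK_{X/Y}) \cong f^*O_Y(bM_Y) \otimes O_X(B_X)$, which singles out a canonical section $\sigma \in H^0(X, O_X(B_X))$ with divisor exactly $B_X$; this section is unique up to scalar since $h^0(O_X(B_X)) = 1$. The coefficient of $Q$ in $B_X$ equals $\mathrm{ord}_Q(\sigma)$, so the goal is to show $\sigma$ vanishes along $Q$. Since $K_X \equiv_Y \tfrac{1}{b}B_X$, the relevant positivity is governed by $B_X$, and running a relative MMP over $Y$ is the natural tool.

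My preferred route is the minimal-model approach: by Theorem~\ref{thm:MMP to isotrivial}, if a good minimal model $X'$ of $X$ over $Y$ exists, then $f' \colon X' \to Y$ is locally trivial and hence flat. A flat morphism between smooth projective varieties is equidimensional, so every prime divisor of $X'$ must dominate either $Y$ or a prime divisor of $Y$; in particular $X'$ admits no $f'$-exceptional prime divisor. Consequently the strict transform of $Q$ cannot survive in $X'$, so $Q$ must be contracted along the MMP $\pi \colon X \dashrightarrow X'$. But Lemma~\ref{lem:mmp for parabolic} shows every divisor contracted by $\pi$ is a component of $B_X$, contradicting $Q \not\subset B_X$. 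The main obstacle in this plan is that the existence of a good minimal model is not among the hypotheses of Lemma~\ref{lem:exceptional}; this must be addressed either by invoking relative BCHM for the log Calabi--Yau pair $(X, \tfrac{1}{b}B_X)$ over $Y$ after a log resolution, or by reducing to a setting where the good minimal model is known to exist.

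A more direct alternative avoids the MMP entirely and uses Lemma~\ref{lem:very exceptional}: consider the effective divisor $D = B_X + Q$, whose vertical part $D^v = B_X^v + Q$ is still very exceptional over $Y$. Indeed, for any prime divisor $P \subset Y$, the witness $Q'$ dominating $P$ and avoiding $B_X^v$ (guaranteed by the very exceptionality of $B_X^v$ established in Lemma~\ref{lem:Eff of BX}) is automatically distinct from $Q$, since $\codim f(Q) \geq 2$ while $Q'$ dominates a codimension-one $P$; thus $Q' \not\subset D^v$. Applying Lemma~\ref{lem:very exceptional} with $D_1 = B_X^h$ (for which $h^0(B_X^h|_F) = h^0(bK_F) = 1$), $D_2 = D^v$, and $L = 0$ yields $f_*O_X(D) = O_Y$, so by the projection formula $f_*O_X(bK_{X/Y}+Q) = O_Y(bM_Y) = f_*O_X(bK_{X/Y})$. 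The inclusion $O_X(bK_{X/Y}) \hookrightarrow O_X(bK_{X/Y}+Q)$ thus pushes forward to the identity on $O_Y(bM_Y)$, and chasing the canonical section $\sigma$ through the associated short exact sequence $0 \to O_X(bK_{X/Y}) \to O_X(bK_{X/Y}+Q) \to O_Q(bK_{X/Y}+Q)|_Q \to 0$ forces $\sigma|_Q = 0$, contradicting $Q \not\subset B_X$. The main obstacle here is completing the diagram chase cleanly: one must ensure the canonical section of the Fujino--Mori formula does not survive on the cokernel, which is where the specific structure of $\sigma$ (as arising from the identity under adjunction) enters decisively.
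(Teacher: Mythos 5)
Your second (``direct'') route is the one you push furthest, so let me address it first. The computation up to $f_*O_X(B_X+Q)=O_Y$, equivalently $f_*O_X(bK_{X/Y}+Q)=O_Y(bM_Y)=f_*O_X(bK_{X/Y})$, is correct: $B_X^v+Q$ is indeed very exceptional and Lemma \ref{lem:very exceptional} applies. But this equality cannot produce the contradiction you want, because it holds \emph{whether or not} $Q\subset B_X$: pushforwards of this kind only record codimension-one information on $Y$, whereas the assertion to be proved is about what happens over the codimension-$\geq 2$ locus $f(Q)$. Concretely, the final ``diagram chase'' is a non sequitur: the inclusion $O_X(bK_{X/Y})\hookrightarrow O_X(bK_{X/Y}+Q)$ is multiplication by the canonical section of $O_X(Q)$, so the image of your section $\sigma$ in the quotient $O_Q\bigl((bK_{X/Y}+Q)|_Q\bigr)$ vanishes automatically, for \emph{any} $\sigma$, and yields no control on $\operatorname{ord}_Q\sigma$, i.e.\ on the coefficient of $Q$ in $B_X$. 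Nor can you imitate Step 2 of Lemma \ref{lem:irreducible}: there Koll\'ar's torsion-freeness applies to $R^1f_*\omega_X$ (not to $R^1f_*O_X(bK_{X/Y})$ for $b>1$), and the actual contradiction comes from the count of Theorem \ref{thm:sum of pg}, which has no analogue for an $f$-exceptional divisor. Your first route has the gap you yourself flag, and the suggested fix does not work: relative BCHM does not give a good minimal model of $X$ over $Y$ here, since $K_X\equiv_Y \frac{1}{b}B_X$ is relatively numerically equivalent to an effective divisor --- the setting is relatively (numerically) Calabi--Yau, not of log general type over $Y$ --- and existence of such a model is essentially the extra hypothesis of Theorem \ref{thm:MMP to isotrivial} that Theorem \ref{thm:sing fiber}, hence this lemma, is designed to avoid.

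The missing idea, and what the paper actually uses, is a birational flattening plus discrepancy positivity over the smooth base. By Viehweg's lemma there is a diagram $p\colon X'\to X$, $q\colon Y'\to Y$, $f'\colon X'\to Y'$ such that every divisor on $X'$ exceptional over $Y'$ is $p$-exceptional. Then the strict transform $Q'$ of $Q$ is not exceptional over $Y'$, so it dominates a $q$-exceptional prime divisor $P'\subset Y'$, which appears with positive coefficient in $E_2$ where $K_{Y'}=q^*K_Y+E_2$ (here smoothness of $Y$ enters). Pulling back the canonical bundle formula gives $B_{X'}=E_1-f'^*(E_2+E_3)+\frac{1}{b}p^*B_X$ with $E_3=M_{Y'}-q^*M_Y\geq 0$, and since $B_{X'}$ can have negative coefficients only along divisors exceptional over $Y'$, one gets $0\leq \operatorname{mult}_{Q'}B_{X'}$, forcing $\operatorname{mult}_{Q'}p^*B_X>0$ and hence $Q\subset B_X$. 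Neither of your routes contains this input, and without something playing its role the statement does not follow from pushforward identities or from an MMP whose existence is not available.
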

\begin{proof}
By \cite[Lemma 4.2.2]{Fujino2020BookIitaka}, there exists a commutative diagram

\begin{center}
\begin{tikzcd}
X \arrow[d, "f"] & X' \arrow[d, "f'"] \arrow[l, "p"'] \\
Y                & Y'\arrow[l, "q"']                
\end{tikzcd}    
\end{center}
where $Y'$, $X'$ are smooth projective varieties, $p,q$ are birational morphisms, $f'$ is a fibration, such that any $f'$-exceptional divisor is $p$-exceptional.

Let $M_{Y'}$ be the moduli divisor of the parabolic fibration $f'$.
Write $q^*M_Y+E_3=M_{Y'}$ for some $q$-exceptional divisor $E_3$ which is not necessarily effective. Since $M_{Y'}$ is pseudo-effective and $M_Y\equiv0$, $E_3$ is pseudo-effective. Hence $E_3$ is effective by a lemma due to Lazarsfeld (cf. \cite[Corollary 13]{Kollar2009QuotOfCalabi}). 

Write 
\begin{align*}
K_{X'}=p^*K_X+E_1  \text{ and }  K_{Y'}=q^*K_Y+E_2,
\end{align*}
where $E_1$, $E_2$ are effective exceptional divisors of the birational morphisms $p:X'\to X$ and $q:Y'\to Y$, respectively. By pulling back the canonical bundle formula for $f$:
\begin{align*}
K_X=f^*(K_Y+M_Y)+\frac{1}{b}B_X,    
\end{align*} we have 
\begin{align*}
K_{X'}=f'^*(K_{Y'}+M_{Y'})+E_1-f'^*(E_2+E_3)+\frac{1}{b}p^*B_X.    
\end{align*} 
By comparing this formula with the canonical bundle formula for $f'$:
\begin{align*}
K_{X'}=f'^*(K_{Y'}+M_{Y'}+B_{Y'})+\frac{1}{b}B_{X'},    
\end{align*}
we have $f'^*B_{Y'}+\frac{1}{b}B_{X'}=E_1-f'^*(E_2+E_3)+\frac{1}{b}p^*B_X$. Hence all components of $E_1-f'^*(E_2+E_3)+\frac{1}{b}p^*B_X$ with negative coefficient are exceptional over $Y'$. 

Let $Q\subset X$ be a prime divisor exceptional over $Y$, and let $Q'$ be its strict transform on $X'$. Then $Q'$ is not exceptional over $Y'$. Let $P'=f'(Q')$. Then $P'$ is exceptional over $Y$. Hence $P'\subset E_2$, so $Q'\subset f'^*E_2$. Since $Q'$ is not exceptional over $Y'$, we have \begin{align*}
0\leq\mult_{Q'}(E_1-f'^*(E_2+E_3)+\frac{1}{b}p^*B_X)=\mult_{Q'}(-f'^*(E_2+E_3)+\frac{1}{b}p^*B_X).    
\end{align*} Hence $Q'\subset p^*B_X$ and $Q\subset B_X$. 
\end{proof}

\begin{lem}\label{lem:irreducible}
Let $f:X\to Y$ be a parabolic fibration between smooth projective varieties. If $M_Y\equiv0$ and $B_Y=0$, and if $P\subset Y$ is a prime divisor, then the divisor $f^*P\setminus B_X$ is reduced and irreducible.
\end{lem}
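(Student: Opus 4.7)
My plan is to prove reducedness and irreducibility of $f^*P\setminus B_X$ separately; only the latter requires Theorem \ref{thm:sum of pg}.

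First, reducedness. Write $f^*P=\sum a_iQ_i$ and let $Q_i$ be a component not contained in $B_X$. By Theorem \ref{thm:fujino-mori}(2), the hypothesis $B_Y=0$ forces $t_P=1$, which says that the pair $(X,-\frac{1}{b}B_X+f^*P)$ is sub-log-canonical over the generic point of $P$. Reading off the coefficient along $Q_i$ gives $-\frac{1}{b}\mult_{Q_i}(B_X)+a_i\leq 1$, and since $\mult_{Q_i}(B_X)=0$ this forces $a_i=1$.

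For irreducibility, cut $Y$ by successive sufficiently general hyperplane sections to reduce to the case $Y=C$ a smooth projective curve. Under such a cut the Fujino--Mori data restrict compatibly: the moduli remains numerically trivial, the discriminant remains zero (since $t_P=1$ for every divisor $P\subset Y$ meeting the cut), and the components of $f^{-1}P\setminus B_X$ restrict bijectively to those of the fiber over $p:=P\cap C$ of the restricted fibration. Over the curve $C$, I would take a finite base change $\pi:\tilde C\to C$ sufficiently ramified at $p$ together with a log resolution to obtain a semistable model $\tilde f:\tilde X\to\tilde C$. To obtain a nontrivial input to Theorem \ref{thm:sum of pg}, one should first perform a canonical index-one cyclic cover of degree $b$ (using that the general fiber $F$ satisfies $bK_F\sim 0$), so that the resulting fibration has $b=1$. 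Semistability then forces $B_{\tilde C}=0$, and the moduli $\tilde M_{\tilde C}$ remains numerically trivial; by Lemma \ref{lem:Eff of BX}, $\tilde f_*O(K_{\tilde X/\tilde C})=O_{\tilde C}(\tilde M_{\tilde C})$ has degree $0$. Applying Theorem \ref{thm:sum of pg} to the fiber $\tilde X_{\tilde p}=\sum_j\tilde X_j$ gives $\sum_j\dim H^0(K_{\tilde X_j})=\dim H^0(K_{\tilde X_t})=1$, so exactly one component of the fiber carries a nonzero canonical form.

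The main obstacle is then to match this unique $p_g$-positive component with the unique component of $\tilde X_{\tilde p}$ not contained in $B_{\tilde X}$. Adjunction for $\tilde X_j\subset\tilde X$ gives $K_{\tilde X/\tilde C}|_{\tilde X_j}=K_{\tilde X_j}+D_j$ with $D_j$ the double locus; combining this with $K_{\tilde X/\tilde C}\sim_{\Qq}\tilde f^*\tilde M_{\tilde C}+B_{\tilde X}$ and the numerical triviality of $\tilde f^*\tilde M_{\tilde C}|_{\tilde X_j}$ yields an explicit expression for $K_{\tilde X_j}$ in terms of the multiplicity of $\tilde X_j$ in $B_{\tilde X}$ and of its intersections with neighboring components. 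A careful case analysis of this expression, handling separately components contained in $B_{\tilde X}$ and those not, should show that $H^0(K_{\tilde X_j})=0$ precisely when $\tilde X_j\subset B_{\tilde X}$. Finally, descending through the base change and resolution, uniqueness of the non-$B_{\tilde X}$ component of $\tilde X_{\tilde p}$ lifts to uniqueness of the non-$B_X$ component of $f^{-1}(p)$, completing the proof.
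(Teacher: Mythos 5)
Your reduction to a curve, the degree-$b$ canonical cover plus semistable reduction, and the use of Theorem \ref{thm:sum of pg} with $\deg \tilde f_*O(K_{\tilde X/\tilde C})=0$ all follow the paper's skeleton, and your reducedness argument via $t_P=1$ is the paper's. But the step you yourself flag as ``the main obstacle'' is a genuine gap, and the route you propose for it will not close it. You want to prove that $H^0(K_{\tilde X_j})=0$ precisely when $\tilde X_j\subset B_{\tilde X}$ by adjunction and a case analysis. Adjunction on the semistable fiber gives $K_{\tilde X_j}\sim_{\Qq}\sum_{k\neq j}(b_k-b_j-1)\,\tilde X_k|_{\tilde X_j}+B^h_{\tilde X}|_{\tilde X_j}$ (with $b_k=\mult_{\tilde X_k}B_{\tilde X}$), and the direction you actually need --- $\tilde X_j\not\subset B_{\tilde X}$ implies $p_g(\tilde X_j)>0$ --- cannot be read off from this: whenever two non-$B_{\tilde X}$ components meet, the expression acquires coefficient $-1$ terms along their intersection, and that is exactly the configuration the lemma is trying to exclude, so the proposed case analysis is circular at the decisive point. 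Deciding $H^0$ of such a class on a component of arbitrary dimension is not a local bookkeeping matter; in effect your matching claim is as strong as the lemma itself.

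The paper avoids the matching problem entirely by arguing by contradiction with a global pushforward argument in the semistable, $b=1$ case: if at least two components of the fiber lie outside $B$, then for \emph{every} fiber component $Q$ the divisor $B^v+Q$ is very exceptional, so $f_*O(K_X+Q)=f_*O(K_X)$ by Lemma \ref{lem:very exceptional}; combined with the torsion-freeness of $R^1f_*\omega_X$ (Koll\'ar) applied to $0\to O(K_X)\to O(K_X+Q)\to O(K_Q)\to 0$, this forces $H^0(K_Q)=0$ for all components $Q$, contradicting $\sum_Q\dim H^0(K_Q)=1$ from Theorem \ref{thm:sum of pg}. You would need to import this (or an equivalent) argument to make your plan work. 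Separately, your final ``descending through the base change'' step is only gestured at: one must compare $B_{\tilde X}$ with $(\pi\circ p)^*B_X$ via the ramification divisor (as the paper does, using that components of $f^*P\setminus B_X$ have coefficient $1$ because $\lct=1$) to see that distinct non-$B_X$ components of $f^*P$ produce distinct non-$B_{\tilde X}$ components upstairs; without that computation the descent does not follow.
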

\begin{proof}
Step 1. To prove that $f^*P\setminus B_X$ is reduced and irreducible, we may replace $Y$ by the intersection of general hyperplane sections $C=H_1\cap \cdots \cap H_{\dim Y-1}$ and $X$ by $f^*H_1\cap \cdots \cap f^*H_{\dim Y-1}$.

Step 2. Assume that $f$ is semistable and that $b:=\min\{k\in\Zz_{>0}\mid \dim H^0(kK_F)>0\}=1$, where $F$ is a general fiber. The canonical bundle formula writes $K_X=f^*(K_C+M_C)+B_X$ and $\deg f_*O(K_{X/C})=\deg M_C=0$. Let $Q$ be a component of $f^*P$. By the exact sequence
\begin{align*}
0\to O_X(K_X)\to O_X(K_X+Q)\to O_Q(K_{Q})\to0,    
\end{align*} we have 
\begin{align*}
0\to f_*O_X(K_X)\to f_*O_X(K_X+Q)\to H^0(K_{Q})\otimes\Cc(P)\to R^1f_*O_X(K_X)    
\end{align*}
is exact. Since $R^1f_*O_X(K_X)$ is torsion free by \cite[Theorem 2.1]{Kollar1986Higher1}, and $H^0(K_{Q})\otimes\Cc(P)$ is a skyscraper sheaf, the morphism 
\begin{align*}
H^0(K_{Q})\otimes\Cc(P)\to R^1f_*O_X(K_X)    
\end{align*} is zero. Assume that $f^*P\setminus B_X$ is reducible. Then $B^v_X+Q$ is very exceptional. Hence 
\begin{align*}
f_*O_X(K_X)=f_*O_X(K_X+Q)=O_C(K_C+M_C)    
\end{align*} by Lemma \ref{lem:very exceptional}. Hence $H^0(K_{Q})=0$ for any component $Q$ of $f^*P$. On the other hand, by Theorem \ref{thm:sum of pg}, we have
\begin{align*}
1=\dim H^0(K_F)=\sum_{Q\subset f^*P}\dim H^0(K_Q)=0,\end{align*} which is a contradiction. Hence $f^*P\setminus B_X$ is irreducible.

Step 3. In general, let $\pi:W\to X$ be the canonical cover of $f$ (cf. \cite[Remark 2.6]{FujinoMori2000}). Set $g=f\circ\pi$. Then $g:W\to C$ is a parabolic fibration whose general fiber $\Tilde{F}$ has $\dim H^0(K_{\Tilde{F}})=1$. Let $q:C'\to C$ be a semistable reduction of $g$, and let $g':W'\to C'$ be the induced fibration. We have the following commutative diagram.

\begin{center}
\begin{tikzcd}
W \arrow[d, "\pi"] \arrow[dd, "g"', bend right] & W' \arrow[l, "p"'] \arrow[dd, "g'"] \\
X \arrow[d, "f"]                                &                                     \\
C                                               & C' \arrow[l, "q"']                 
\end{tikzcd}   
\end{center}

Let $M_{C'}$ and $B_{C'}$ be the moduli divisor and the discriminant divisor of $g'$, respectively. By \cite[Corollary 2.5 and Lemma 3.4]{FujinoMori2000}, $M_{C'}=q^*M_C\equiv0$. Since $g'$ is semistable, we have $B_{C'}=0$. Write the canonical bundle formula of $f$ as 
\begin{align*}
K_X=f^*(K_C+M_C)+\frac{1}{b}B_X.    
\end{align*}
By pulling back along $\pi\circ p$, we have 
\begin{align*}
K_{W'}=g'^*(q^*K_C+M_{C'})+\frac{1}{b}(\pi\circ p)^*B_X+R,
\end{align*} where $R=K_{W'}-(\pi\circ p)^*K_X$ is the ramification divisor of the generically finite morphism $\pi\circ p$.

Fix a point $P'\in C'$ with $q(P')=P$ and $q^*P=mP'$ near $P'$. Then near $P'$, we have 
\begin{align*}
K_{W'}=g'^*(K_{C'}+M_{C'})+\frac{1}{b}(\pi\circ p)^*B_X+R-(m-1)g'^*P'.    
\end{align*}
By comparing it with the canonical bundle formula of $g'$:
\begin{align*}
K_{W'}=g'^*(K_{C'}+M_{C'})+B_{W'},    
\end{align*} we have 
\begin{align*}
B_{W'}=\frac{1}{b}(\pi\circ p)^*B_X+R-(m-1)g'^*P'.    
\end{align*}

Let $Q$ be a component of $f^*P\setminus B_X$. Then its coefficient in $f^*P\setminus B_X$ is $1$ by $\lct(X,-\frac{1}{b}B_X,f^*P)=1$. Let $Q'$ be a component of $g'^*P'$ such that $\pi\circ p(Q')=Q$, then $(\pi\circ p)^*Q=mQ'$ near $Q'$. Since near $Q'$, $R=(m-1)Q'$, we have $Q'\not\subset B_{W'}$. By Step 2, we know that $g'^*P'\setminus B_{W'}$ is irreducible. Hence $f^*P\setminus B_{X}$ is irreducible. It is reduced by $\lct(X,-\frac{1}{b}B_X,f^*P)=1$.
\end{proof}

\begin{proof}[Proof of Theorem \ref{thm:sing fiber}]
This follows from Lemmas \ref{lem:Eff of BX}, \ref{lem:exceptional}, and \ref{lem:irreducible}.
\end{proof}

\section{Proof of Theorem \ref{thm:sum of pg}}
We recall mixed Hodge structures that appear in the degeneration of varieties. Our main reference is \cite{TopicsTranscendental}, especially Chapter VI.

A weight $m$ Hodge structure $(F,H_{\Zz})$ consists of a finite rank lattice $H_{\Zz}$ and a decreasing filtration (the Hodge filtration) on $H:=H_{\Zz}\otimes\Cc$: $0\subset F^m\subset F^{m-1}\subset\cdots\subset F^0=H$ such that $H=F^p\oplus \overline{F^{m-p+1}}$. For a Hodge structure $H_{\Zz}$, we set $H^{p,q}:={F^p}/{F^{p+1}}$ ($q=m-p$). A polarization of a Hodge structure $H_{\Zz}$ is a bilinear form $Q$ on $H$, which is symmetric for $m$ even, skew-symmetric for $m$ odd, and satisfying $Q(H^{p,q},H^{p',q'})=0$ unless $p=q'$, $q=p'$ and $i^{p-q}Q(s,\overline{s})>0$ if $0\neq s\in H^{p,q}$.

Let $Y$ be a complex manifold. A weight $m$ variation of Hodge structure $(F,H_{\Zz})$  on $Y$ consists of a local system of finite rank lattice $H_{\Zz}$ and a decreasing filtration (the Hodge filtration) of holomorphic subbundles of $H:=H_{\Zz}\otimes O_{Y}$: $0\subset F^m\subset F^{m-1}\subset \cdots\subset F^0=H$, such that at each point of $Y$, the fiber of $(F,H_{\Zz})$ is a weight $m$ Hodge structure, and satisfying Griffiths transversality: $\nabla F^p\subset F^{p-1}\otimes\Omega^1_Y$ for all $p$, where $\nabla$ is the flat connection on $H$. A polarization $Q$ of a variation of Hodge structure $(F,H_{\Zz})$ is a flat bilinear form $Q$ on $H$, such that at each point of $Y$, the fiber of $(F,H_{\Zz},Q)$ is a polarized Hodge structure.

Let $(F,H_{\Zz},Q)$ be a polarized variation of Hodge structure on a complex manifold. We call $H^{p,q}=F^p/F^{p+1}$ the Hodge bundles and call the metric on $H^{p,q}$ induced by the polarization $Q$ the Hodge metric. 

\begin{exa}\label{example1}
Let $X$ be a smooth projective variety of dimension $n$. 

Let $F^p=\oplus_{k\geq p}H^{k,m-k}(X)$, $0\leq p\leq m$, where $H^{k,m-k}(X)=H^{m-k}(X,\Omega_X^k)$. Then $(F,H^m(X,\Zz)/\text{torsion})$ is a weight $m$ Hodge structure.

Let $\omega$ be the positive $(1,1)$-form induced by an ample line bundle. Let $P^m(X,\Cc):=H^m(X,\Cc)_{\text{prim}}$ be the primitive cohomology. Let $P^m(X,{\Zz}):=H^m(X,\Zz)\cap P^m(X,\Cc)$. Let $P^{p,q}(X):=H^{p,q}(X)\cap P^m(X,\Cc)$, $p+q=m$, $0\leq p\leq m$. Let $F^p=\oplus_{k\geq p}P^{k,m-k}(X)$, $0\leq p\leq m$. Let $Q$ be the following bilinear form on $P^m(X,\Cc)$: $Q(s,t)=(-1)^{{m(m-1)}/{2}}\int_Xs\wedge t\wedge \omega^{n-m}$. Then $(F, P^m(X,\Zz),Q)$ is a weight $m$ polarized Hodge structure. 
\end{exa}

\begin{exa}\label{example2}
Let $f:X\to Y$ be a smooth projective morphism between complex manifolds, with fiber dimension $n$. Let $\omega$ be the fiberwise positive $(1,1)$-form induced by a relative ample line bundle. Let $H_{\Zz}$ be a local system of lattices, let $F$ be a filtration of holomorphic subbundles of $H:=H_{\Zz}\otimes O_Y$, and let $Q$ be a flat bilinear form on $H$, such that at each point $y\in Y$,
the fiber of $(F,H_{\Zz},Q)$ is the polarized Hodge structure $(F, P^m(X_y,\Zz),Q)$ for $(X_y,\omega|_{X_y})$ given in the previous example. Then $(F,H_{\Zz},Q)$ is a weight $m$ polarized variation of Hodge structure. We also write this polarized variation of Hodge structure simply as $(R^mf_*\Zz_X)_{\text{prim}}$. Note that if $m=n$, then $F^n=f_*O_X(K_{X/Y})$.
\end{exa}

A mixed Hodge structure $(F,W,H_{\Zz})$ consists of a finite rank lattice $H_{\Zz}$, an increasing filtration (the weight filtration) $W$ on $H_{\Qq}:=H_{\Zz}\otimes\Qq$, and a decreasing filtration (the Hodge filtration) $F$ on $H:=H_{\Zz}\otimes\Cc$, such that each $\gr^W_m:=W_m/W_{m-1}$ together with the filtration induced by $F$ is a Hodge structure of weight $m$. More precisely, $F^p\gr_m^W={F^p\cap W_m\otimes\Cc}/{F^p\cap W_{m-1}\otimes\Cc}$. For a mixed Hodge structure $(F,W,H_{\Zz})$, we set $I^{p,q}H:=(\gr_{p+q}^W)^{p,q}=F^p\gr_{p+q}^W/F^{p+1}\gr_{p+q}^W$.

Let $f:X\to \Delta$ be a semistable fibration over a disc $\Delta\subset\Cc$ such that $f$ is smooth over $\Delta\setminus\{0\}$. Let $n$ be the fiber dimension. Fix a smooth fiber $X_{t_0}$, $t_0\in \Delta\setminus\{0\}$ and let $X_0=\sum_iX_i$ be the central fiber. Then
\begin{enumerate}
    \item there is a mixed Hodge structure on $H^n(X_0):=H^n(X_0,\Cc)$, which satisfies 
    \begin{align}\label{pg of snc}
     I^{n,0}H^n(X_0)=\oplus_i H^{n,0}(X_i), \text{ where } H^{n,0}(X_i)=H^0(K_{X_i}),  
    \end{align} 
    \item there is a mixed Hodge structure $(F_{\infty},W, H^n_{\infty})$ on $H^n(X_{t_0}):=H^n(X_{t_0},\Cc)$, called the limit mixed Hodge structure, and
    \item (as a corollary of the Clemens--Schmid sequence) there is an isomorphism of vector spaces  \cite[p.118, third row from the bottom]{TopicsTranscendental}:
    \begin{align}\label{iso of mixed}
     I^{n,0}H^n(X_0)\cong I^{n,0}H^n_{\infty}.   
    \end{align} 
\end{enumerate}
Since the proof of Theorem \ref{thm:sum of pg} depends on the construction of $(F_{\infty},W, H^n_{\infty})$, we briefly sketch it here. Since the restriction $X\setminus f^{-1}\{0\}\to\Delta\setminus\{0\}$ is a smooth fibration, the fundamental group $\pi_1(\Delta\setminus\{0\},t_0)$ acts on $H^n(X_{t_0},\Qq)$. Let $T$ be the automorphism (the monodromy transformation) of $H^n(X_{t_0},\Qq)$ induced by the generator of $\pi_1(\Delta\setminus\{0\},t_0)$. The monodromy theorem  \cite{Landman1973Picard} states that $(T-I)^{n+1}=0$. Set \[N:=\log T=(T-I)-\frac{1}{2}(T-1)^2+\cdots+(-1)^{n+1}\frac{1}{n}(T-I)^n,\] then $N^{n+1}=0$. 

\textbf{Construction of $W$.} Define 
\begin{align*}
W_0=\Ima N^n, W_{2n-1}=\ker N^n, \text{ and } W_{2n}=H^n(X_{t_0},\Qq). \end{align*} Then on $W_{2n-1}/W_0$, we have $(N|_{W_{2n-1}/W_0})^n=0$. We define $W_1$, $W_{2n-2}$ as subspaces of $H^n(X_{t_0},\Qq)$ such that 
\begin{align*}
W_1/W_0=\Ima (N|_{W_{2n-1}/W_0})^{n-1} \text{ and } W_{2n-2}/W_0=\ker (N|_{W_{2n-1}/W_0})^{n-1}.    
\end{align*}Continuing this process, we obtain the filtration 
\begin{align*}
 0\subset W_0\subset W_1\subset\cdots\subset  W_{2n-2}\subset W_{2n-1}\subset W_{2n}=H^n(X_{t_0},\Qq).
 \end{align*} 
We claim that 
 \begin{align}\label{kerN<W_n}
 \ker N\subset W_n.    
 \end{align} 
Let $s\in \ker N$. If $s\in W_{2n-k}$ for some $0\leq k\leq n-1$, then by $W_{2n-k-1}/W_{k-1}=\ker(N|_{W_{2n-k}/W_{k-1}})^{n-k}$, we have $s\in W_{2n-k-1}$. Hence, the claim follows by induction on $k$.

\textbf{Construction of $F_{\infty}$.} 
Let $\pi:U=\{z\in\Cc\mid \text{Im}(z)>0\}\to \Delta\setminus\{0\},z\mapsto e^{2\pi i z}$ be the universal cover. Fix a point $z_0\in \pi^{-1}(t_0)$. For each $z\in U$, let $\gamma_z$ be a path from $z$ to $z_0$, let $F'(z)$ be the usual Hodge filtration of $H^n(X_{\pi(z)},\Cc)$ (see Example \ref{example1}), and let $F(z)$ be the parallel transport of $F'(z)$ along the path $\pi\circ\gamma_z$. Then the resulting $F(z)$ is a filtration on $H^n(X_{t_0},\Cc)$ and satisfies $F(z+1)=TF(z)$. Note that $e^{-zN}F(z)=e^{-(z+1)N}F(z+1)$. In \cite{Schmid1973singularities}, Schmid proved that the limit $\lim_{\text{Im}( z)\to\infty} e^{-zN}F(z)$ exists and $F_{\infty}$ is defined to be the limit.

\begin{proof}[Proof of Theorem \ref{thm:sum of pg}]

Let $C_0\subset C$ be the locus where $f$ is smooth, $X_0:=f^{-1}C_0$, and $f_0:=f|_{X_0}$. Consider the polarized variation of Hodge structure $H_{\Zz}=(R^nf_{0*}\Zz_{X_0})_{\text{prim}}$ (Example \ref{example2}), and let $H=H_{\Zz}\otimes O_{C_0}$, where $n$ is the fiber dimension. Let $\nabla$ be the flat connection on $H$, and let $0\subset F^n\subset F^{n-1}\subset \cdots\subset F^0=H$ be the Hodge filtration. We have Griffiths transversality: $\nabla F^p\subset F^{p-1}\otimes\Omega_{C_0}^1$ for each $p$. Recall that $F^n={f_0}_*O(K_{X_0/C_0})$. $\nabla$ induces a $O_{C_0}$-linear map $\psi:F^n\to (F^{n-1}/F^{n})\otimes\Omega^1_{C_0}$. 

Let $\Theta$ be the curvature form induced by the Hodge metric $h$ on $F^n={f_0}_*O(K_{X_0/C_0})$. Since $f$ is semistable, the monodromy transformations of $H$ around points of $C\setminus C_0$ are unipotent by the monodromy theorem. Hence the integral formula (cf. \cite[Lemma 21]{Kawamata1981Characterization}) gives
\begin{align*}
\deg f_*O_X(K_{X/C})=\frac{i}{2\pi}\int_C\text{Tr }\Theta.    
\end{align*}
Since $\deg f_*O_X(K_{X/C})=0$ and $\Theta$ is Griffiths semi-positive, we have $\Theta=0$. By the formula computing $\Theta$: $h(\Theta s,t)=h(\psi s,\psi t)$, where $s$, $t$ are smooth sections of $F^n$ (cf. \cite[Chapter II Proposition 4]{TopicsTranscendental}), we deduce that $\nabla F^n\subset F^n\otimes \Omega^1_{C_0}$, i.e., $F^n={f_0}_*O(K_{X_0/C_0})$ is a flat subbundle of $H$.

Fix a small disc $\Delta\subset C$ centered at a point of $C\setminus C_0$. We may assume that $t_0\in \Delta\setminus \{0\}$. Let $T\in\Aut H^n(X_{t_0},\Cc)$ be the monodromy transformation around the center of $\Delta$. Since ${f_0}_*O(K_{X_0/C_0})$ is a flat subbundle of $H$, $T$ acts on the fiber of ${f_0}_*O(K_{X_0/C_0})$ at $t_0$, i.e., $T$ acts on the subspace $H^0(K_{X_{t_0}})\subset H^n(X_{t_0},\Cc)$. 
Since $T$ is unipotent and preserves the Hodge metric on $H^0(K_{X_{t_0}})$, we have $T|_{H^0(K_{X_{t_0}})}$ is trivial.

Following the notation in the definition of $F^n_{\infty}$, for a point $z\in U$, $F^n(z)$ is the parallel transport of $H^0(K_{X_{\pi(z)}})$ from $\pi(z)$ to $t_0$ along a path. Since ${f_0}_*O(K_{X_0/C_0})$ is a flat subbundle of $H$, the parallel transport of $H^0(K_{X_{\pi(z)}})$ from $\pi(z)$ to $t_0$ is just the space $H^0(K_{X_{t_0}})$, i.e., we have $F^n(z)=H^0(K_{X_{t_0}})$ for all $z\in U$. Since $T$ acts trivially on $H^0(K_{X_{t_0}})$, we have
\begin{align*}
F^n_{\infty}=\lim_{\text{Im}(z)\to\infty}e^{-zN}F^n(z) =\lim_{\text{Im}(z)\to\infty}e^{-zN}H^0(K_{X_{t_0}})=H^0(K_{X_{t_0}}).  
\end{align*} 
Hence $F^n_{\infty}\subset\ker N$. By \ref{kerN<W_n}, we have $F^n_{\infty}\subset W_n\otimes\Cc$. Hence $I^{n,i}H^{n}_{\infty}={F^n_{\infty}\cap W_{n+i}\otimes\Cc}/{F^n_{\infty}\cap W_{n+i-1}\otimes\Cc}=0$ for all $i\geq1$. Hence $\dim F^{n}_{\infty}=\sum_i\dim I^{n,i}H^n_{\infty}=\dim I^{n,0}H^{n}_{\infty}$ and $\dim I^{n,0}H^n(X_0)=\dim I^{n,0}H^n_{\infty}=\dim F^n_{\infty}=\dim H^0(K_{X_{t_0}})$, where the first equality follows from \ref{iso of mixed}. By \ref{pg of snc}, we conclude that 
\begin{align*}
\sum_i\dim H^0(K_{X_i})= \dim H^0(K_{X_{t_0}}).
\end{align*}
\end{proof}


\subsection*{Acknowledgments}
The author thanks Professor Zhan Li for helpful discussions and encouragement. He thanks the anonymous reviewer whose suggestions improved the article. This work was supported by Professor Mao Sheng through the CAS Project for Young Scientists in Basic Research (Grant No. YSBR-032).

\bibliographystyle{alpha}

\bibliography{bibfile}

\end{document}